\theoremstyle{plain}
\newtheorem{thm}{Theorem}[section]
\newtheorem{lem}[thm]{Lemma}
\newtheorem{prop}[thm]{Proposition}
\newtheorem{cor}[thm]{Corollary}
\theoremstyle{definition}
\newtheorem{defn}[thm]{Definition}
\theoremstyle{remark}
\newtheorem*{rem}{Remark}
\newtheorem*{note}{Note}
\newtheoremstyle{TheoremNum}
        {\topsep}{\topsep}              
        {\itshape}                      
        {}                              
        {\bfseries}                     
        {.}                             
        { }                             
        {\thmname{#1}\thmnote{ \bfseries #3}}
    \theoremstyle{TheoremNum}
    \newtheorem{thmn}{Theorem}
\newtheorem{lemn}{Lemma}
\newtheorem{propn}{Proposition}
\DeclareMathOperator{\id}{Id}
\DeclareMathOperator{\image}{Im}
\DeclareMathOperator{\BS}{BS}
\DeclareMathOperator{\ths}{th}
\DeclareMathOperator{\ti}{T}
\title{The subgroup identification problem for finitely presented groups}
\author{Maurice Chiodo}
\date{\today}
\begin{document}

\let\thefootnote\relax\footnotetext{2010 \textit{AMS Classification:} 20F10, 03D40, 03D80.}
\let\thefootnote\relax\footnotetext{\textit{Keywords:} Effectively coherent groups, subgroup identification problem, decidability.}
\let\thefootnote\relax\footnotetext{\textit{The author was supported by:}}
\let\thefootnote\relax\footnotetext{\emph{An} Australian Postgraduate Award, \emph{and a} David Hay Postgraduate Writing-Up Award.}

\begin{abstract}
We introduce the subgroup identification problem, and show that there is a finitely presented group G for which it is unsolvable, and that it is uniformly solvable in the class of finitely presented locally Hopfian groups.  This is done as an investigation into the difference between strong and weak effective coherence for finitely presented groups.
\end{abstract}

\maketitle

\section{Introduction}

Decision problems for finitely presented groups are well-studied, going back to the work of Dehn \cite{Dehn} in 1911. The \emph{word problem}, of deciding when a word represents the trivial element in a group, was the first of these to be well understood, with the independent work of Boone, Britton and Novikov \cite{Boone, Britton, Novikov}. A clever adaption of this problem by the independent work of Adian and Rabin \cite{Adian, Rabin} gives rise to the impossibility of the \emph{isomorphism problem}, of deciding if two finite presentations describe isomorphic groups. To list all major results in group-theoretic decision problems would be a substantial undertaking, and we defer to \cite{Mille-92} for an excellent introduction and survey of the discipline up to 1992. 

For the remainder of this section we write $X^{*}$ to denote all finite words on a set $X$, $\overline{P}$ to denote the group given by a presentation $P$, and $\langle w_{1}, \ldots, w_{n} \rangle^{\overline{P}}$ to denote the subgroup in $\overline{P}$ generated by words $ w_{1}, \ldots, w_{n}$ (see section \ref{notation} for a clarification of this notation if required).

We introduce the following problem, and give the main result of this work.

\begin{defn}\label{sub ID}
We say that the \emph{subgroup identification problem} for a finite presentation $P=\langle X | R \rangle$ is solvable if there is a partial algorithm that, on input of any finite set of words $ w_{1}, \ldots, w_{n} \in X^{*}$, and any finite presentation $Q=\langle Y | S \rangle$, outputs (if the subgroup $\langle w_{1}, \ldots, w_{n} \rangle^{\overline{P}} \cong \overline{Q}$) an explicit set map $\phi$ from $Y$ to $\{w_{1}, \ldots, w_{n}\}^{*}$ which extends to an isomorphism $\overline{\phi}: \overline{Q} \to \langle w_{1}, \ldots, w_{n} \rangle^{\overline{P}}$. That is, the induced homomorphism $\overline{\phi}: \overline{Q} \to \overline{P}$ is injective, and $\image(\overline{\phi})=\langle w_{1}, \ldots, w_{n} \rangle^{\overline{P}}$. Otherwise, we place no conditions on the output, or even whether it halts.
\end{defn}

\begin{thmn}[\ref{main}]
There is a finitely presented group with unsolvable subgroup identification problem.
\end{thmn}
\mbox{}

As a consequence of lemma \ref{isom check}, having solvable subgroup identification problem depends only on the isomorphism class of a finite presentation.

In \cite[Definition 1.1]{GroWil} a finitely generated group $G$ is said to be \emph{effectively coherent} if it is \emph{coherent} (all of its finitely generated subgroups are finitely presentable) and there is an algorithm for $G$ that, on input of a finite collection of words $S$, outputs a finite presentation for the subgroup generated by $S$. This motivates us to make the following two definitions:

\begin{defn}\label{weak eff coh}
We say a coherent group $G=\overline{\langle X|R \rangle}$ is \emph{weakly effectively coherent} if there is an algorithm for $G$ that, on input of a finite collection of words $S \subset X^{*}$, outputs a finite presentation for the subgroup $\langle S \rangle^{G}$ generated by $S$.
\end{defn}

\begin{defn}\label{strong eff coh}
We say a coherent group $G=\overline{\langle X|R \rangle}$ is \emph{strongly effectively coherent} if there is an algorithm for $G$ that, on input of a finite collection of words $S \subset X^{*}$, outputs a finite presentation $P=\langle Y|V\rangle$ for the subgroup $\langle S \rangle^{G}$ generated by $S$, along with a map $\phi: Y \to X^{*}$ which extends to an injection $\overline{\phi}: \overline{P} \to G$ whose image is $\langle S \rangle^{G}$.
\end{defn}

By definition, strongly effectively coherent groups are weakly effectively coherent. Conversely, 
it is imediate that the notions of strong and weak effective coherence are equivalent in groups with solvable subgroup identification problem. A motivating (unresolved) question for this work is whether strong and weak effective coherence are equivalent notions for all finitely presented groups.

The subgroup identification problem was formulated after the author read the proof of \cite[Lemma 1.2]{GroWil} and \cite[Remark 1.3]{GroWil} where, were it not for a later typographical correction provided in \cite[Definition 1.2]{GroWilcor}, it would suggest that the subgroup identification problem is (uniformly) solvable over all finitely presented groups. This would, in turn, imply that weak and strong effective coherence are equivalent notions. The groups encountered in \cite{GroWil} all have solvable subgroup identification problem, as they are all locally Hopfian (see theorem \ref{can do for locally hopf} below). In addition, for all results shown in \cite{GroWil} all weakly effectively coherent groups are actually proven to be strongly effectively coherent, so combined with the typographical correction from \cite[Definition 1.2]{GroWilcor} this then becomes a moot point. We do not suggest that there are any errors with the main conclusions of \cite{GroWil}. However, \cite[Lemma 1.2 and Remark 1.3]{GroWil} do raise an interesting question regarding the connection between strong and weak effective coherence.

An application of standard techniques gives the following result, which should be read very carefully as one could misinterpret it as showing that the subgroup identification problem is uniformly solvable for all finitely presented groups.
\\

\begin{propn}[\ref{almost}]
There is a uniform partial algorithm that, on input of: A finite presentation $P=\langle X | R \rangle$ of a group, and a finite set of words $w_{1}, \ldots, w_{n} \in X^{*} $ such that $\langle w_{1}, \ldots, w_{n} \rangle^{\overline{P}}$ is finitely presentable, and $Q=\langle Y | S \rangle$ a finite presentation such that $\overline{Q}\cong \langle w_{1}, \ldots, w_{n}\rangle ^{\overline{P}}$;
\\Outputs: A finite set of words $c_{1}, \ldots, c_{k} \in \{w_{1}, \ldots, w_{n}\}^{*}$ such that each $c_{i}$ is trivial in $\overline{P}$ (when viewed as a word in $X^{*}$) and $\overline{\langle w_{1}, \ldots, w_{n} | c_{1}, \ldots, c_{k} \rangle}$ is isomorphic to $\overline{Q}$ and hence also to $\langle w_{1}, \ldots, w_{n} \rangle ^{\overline{P}}$.
\end{propn}
\mbox{}

A group $G$ is \emph{Hopfian} if every surjective endomorphism of $G$ is  an isomorphism, and \emph{locally Hopfian} if every finitely generated subgroup is Hopfian.
\\

\begin{thmn}[\ref{can do for locally hopf}]
The subgroup identification problem is uniformly solvable in the class of finitely presented locally Hopfian groups. Hence, in this class of groups, weak effective coherence is equivalent to strong effective coherence.
\end{thmn}
\mbox{}

Thus the notions of weak and strong effective coherence are equivalent in the class of locally Hopfian groups, and are so because in this class the subgroup identification problem is uniformly solvable. However, as soon as we start dealing with non-Hopfian groups, the situation is a lot different.

The following theorem, and subsequent lemma, are vital to our main result. They show that a solution to the word problem for a finite presentation cannot be uniformly lifted to a solution to the word problem for a recursive presentation isomorphic to that same group (so an isomosphism between the two cannot be constructed). This contrasts the case of having a pair of isomorphic finite presentations, where an isomorphism can be uniformly constructed (lemma \ref{isom check}).
\\

\begin{thmn}[\ref{thm pre main}]
There is a finite presentation $P$ of a group with solvable word problem (namely, the Baumslag-Solitar group $\BS(2,3)$) for which there is no partial algorithm that, on input of a recursive presentation $Q$ such that $\overline{P} \cong \overline{Q}$, outputs a solution to the word problem for $Q$.
\end{thmn}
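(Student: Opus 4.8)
The plan is to reduce the halting problem to the putative word-problem-lifting algorithm, using the non-Hopfian behaviour captured by Theorem \ref{jack}. Recall that Theorem \ref{jack} supplies a finite presentation $P$ of $\BS(2,3)$ together with a recursive presentation $T$ on the same generators, containing all the relators of $P$, such that $\overline{P}\cong\overline{T}$ while the generator-preserving surjection $\overline{\phi}\colon\overline{P}\to\overline{T}$ fails to be injective. This non-injectivity produces a fixed word $v$ in the generators with $v\neq 1$ in $\overline{P}$ but $v=1$ in $\overline{T}$; this single word will serve as the ``detector'' that encodes halting. Note that $\overline{P}\cong\BS(2,3)$ has solvable word problem, so the difficulty the theorem asserts is genuinely about presentations rather than about the underlying group.

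First I would, for each index $e$ of a Turing machine $M_{e}$, build a recursive presentation $Q_{e}$ uniformly in $e$ as follows. Its generators are those of $P$; its relators are produced by always enumerating the finitely many relators of $P$ and, in parallel, simulating $M_{e}$, with the rule that if and when $M_{e}$ halts I begin enumerating the remaining relators of $T$ as well. Since $T$ is a recursive presentation, the resulting set of relators is recursively enumerable uniformly in $e$, so each $Q_{e}$ is a bona fide recursive presentation and the map $e\mapsto Q_{e}$ is computable.

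Next I would verify the two properties that make the reduction go through. If $M_{e}$ halts then the relators of $Q_{e}$ are exactly those of $T$, so $\overline{Q_{e}}=\overline{T}\cong\BS(2,3)$ and $v=1$ in $\overline{Q_{e}}$; if $M_{e}$ never halts then the relators of $Q_{e}$ are exactly those of $P$, so $\overline{Q_{e}}=\overline{P}\cong\BS(2,3)$ and $v\neq 1$ in $\overline{Q_{e}}$. In either case $\overline{Q_{e}}\cong\overline{P}$, so every $Q_{e}$ satisfies the hypothesis of the theorem, and moreover $v=1$ in $\overline{Q_{e}}$ holds if and only if $M_{e}$ halts.

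Finally I would derive the contradiction. Suppose an algorithm $\mathcal{A}$ existed that, on any recursive presentation $Q$ with $\overline{Q}\cong\overline{P}$, output a solution to the word problem for $Q$. Given $e$, I would compute $Q_{e}$, feed it to $\mathcal{A}$ to obtain a word-problem solver for $Q_{e}$, and run that solver on the fixed word $v$; by the equivalence above this decides whether $M_{e}$ halts, contradicting the unsolvability of the halting problem. I expect the only delicate point to be the bookkeeping: checking that $Q_{e}$ is uniformly recursive and that the promise $\overline{Q_{e}}\cong\overline{P}$ holds \emph{regardless} of the behaviour of $M_{e}$, so that $\mathcal{A}$ is obliged to succeed on every $Q_{e}$. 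The entire group-theoretic core is delegated to the non-Hopfian construction of Theorem \ref{jack}.
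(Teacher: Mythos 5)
Your proposal is correct, and it is essentially the paper's own second proof of theorem \ref{thm pre main} (the one attributed to Chuck Miller): fix a non-Hopfian quotient presentation of $\BS(2,3)$ with a hard-coded witness word that dies in the quotient but not in $\overline{P}$, switch the extra relators on when a machine halts, and note that the promise $\overline{Q_{e}}\cong\overline{P}$ holds either way, so the assumed word-problem solver decides halting. The only cosmetic difference is that you phrase the reduction via the halting problem where the paper uses membership in an arbitrary non-recursive r.e.\ set, which is the same thing; the paper's first proof takes a different, more quantitative route (computing $|W_{n}|$ via the nested kernels $\ker(\overline{f^{i}})$, contradicting lemma \ref{finite re}), but your argument matches the Miller proof and is complete.
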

\mbox{}

\begin{lemn}[\ref{pre main}]
There is a finite presentation $P=\langle X|R \rangle$ of a group with solvable word problem (namely, $\BS(2,3)$) for which there is no partial algorithm that, on input of a recursive presentation $Q=\langle Y|S \rangle$ such that $\overline{P} \cong \overline{Q}$, outputs a set map $\phi: X \to Y^{*}$ which extends to an isomorphism $\overline{\phi} : \overline{P} \to \overline{Q}$.
\end{lemn}
\mbox{}

By viewing isomorphisms between groups as Tietze transformations rather than set maps (see \cite[$\S 1.5$]{MKS} for a full exposition of this concept), we can re-interpret lemma \ref{pre main} in the following way.
\\

\begin{lemn}[\ref{Ti version}]
There is a finite presentation $P$ of a group with solvable word problem (namely, $\BS(2,3)$) for which there is no algorithm that, on input of a recursive presentation $Q$ such that $\overline{P} \cong \overline{Q}$, outputs a recursive enumeration of Tietze transformations of type $(\ti 1)$ and $(\ti 2)$ (manipulation of relators), and a finite set of Tietze transformations of type $(\ti 3)$ and $(\ti 4)$ (addition and removal of generators), with notation as per \cite[$\S 1.5$]{MKS}, transforming $P$ to $Q$.
\end{lemn}
\mbox{}

A direct consequence of the Higman embedding theorem \cite[Theorem 1]{Hig emb} is that any recursively presented group can be uniformly embedded into a finitely presented group. By a careful application of this result, paying special attention to the uniformity in which such a finite presentation is constructed as given in the proof of \cite[Theorem 12.18]{Rot}, our main result (theorem \ref{main}) follows. 
\\

\noindent \textbf{Acknowledgements}: The author would like to thank Jack Button and Andrew Glass for their general advice, and Chuck Miller for giving general comments and corrections as well as pointing out a more elegant proof of lemma \ref{jack} and a much more direct proof of theorem \ref{main}. Thanks also go to Nicholas Touikan for his helpful suggestions for a revised version of this work, and to Daniel Groves and Henry Wilton for explaining their work in \cite{GroWil}.

\section{Preliminaries}

\subsection{Standard notation}\label{notation}

With the convention that $\mathbb{N}$ contains $0$, we take $\varphi_{m}$ to be the $m^{\ths}$ \emph{partial recursive function} $\varphi_{m}: \mathbb{N} \to \mathbb{N}$, and the $m^{\ths}$ \emph{partial recursive set} (r.e.~set) $W_{m}$ as the domain of $\varphi_{m}$.
If $P=\langle X|R\rangle$ is a group presentation with generating set $X$ and relators $R$, then we denote by $\overline{P}$ the group presented by $P$. A presentation $P=\langle X|R\rangle$ is said to be a \emph{recursive presentation} if $X$ is a finite set and $R$ is a recursive enumeration of relators; $P$ is said to be an \emph{infinite recursive presentation} if instead $X$ is a recursive enumeration of generators. A group $G$ is said to be \emph{finitely presentable} if $G\cong \overline{P}$ for some finite presentation $P$.
If $P,Q$ are group presentations then we denote their free product presentation by $P*Q$, given by taking the disjoint union of their generators and relators; this extends to the free product of arbitrary collections of presentations.
If $X$ is a set, then we denote by $X^{-1}$ a set of the same cardinality as $X$ along with a fixed bijection $\phi: X \to X^{-1}$, where we denote $x^{-1}:=\phi(x)$. We write $X^{*}$ for the set of finite words on $X \cup X^{-1}$, including the empty word $\emptyset$. If $g_{1}, \ldots, g_{n}$ are a collection of elements of a group $G$, then we write $\langle   g_{1}, \ldots, g_{n} \rangle^{G}$ for the subgroup in $G$ generated by these elements, and $\llangle g_{1}, \ldots, g_{n} \rrangle^{G}$ for the normal closure of these elements in $G$. A group $G$ is \emph{Hopfian} if every surjective endomorphism of $G$ is  an isomorphism, and \emph{locally Hopfian} if every finitely generated subgroup is Hopfian. Finally, the commutator $[x,y]$ is taken to be $xyx^{-1}y^{-1}$.

\subsection{Groups}

It is a result by Mihailova \cite{Mihai} that there exists a finitely presented group with solvable word problem for which the \emph{subgroup membership problem}, of deciding when a word on the generators lies in a given finitely generated subgroup, is unsolvable. We mention this in the context of the following known result about the word problem for HNN extensions of groups.

\begin{lem}\label{swp}
Let $H, K \leqslant G$ be isomorphic finitely generated subgroups of $G$, each having solvable subgroup membership problem in $G$. Let $\varphi:H \to K$ be an isomorphism. Then the HNN extension $G*_{\varphi}$ has solvable word problem.
\end{lem}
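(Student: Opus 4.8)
The plan is to solve the word problem in $G*_{\varphi}$ by the standard Britton-reduction procedure, using the solvable membership problems to detect when a reduction is possible and the isomorphism $\varphi$ to carry it out. Recall that every element of $G*_{\varphi}$ is represented by a word in syllable form $w = g_{0} t^{\epsilon_{1}} g_{1} t^{\epsilon_{2}} \cdots t^{\epsilon_{n}} g_{n}$, where each $g_{i}$ is a word in the generators of $G$ and each $\epsilon_{i} \in \{+1,-1\}$; passing from an arbitrary word to this form is purely syntactic. Britton's Lemma states that if $n \geq 1$ and $w =_{G*_{\varphi}} 1$, then $w$ must contain a \emph{pinch}: a subword $t g_{i} t^{-1}$ with $\overline{g_{i}} \in H$, or a subword $t^{-1} g_{i} t$ with $\overline{g_{i}} \in K$. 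This is the key tool of the whole argument.

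The algorithm I would run on input $w$ is as follows. Put $w$ in syllable form and search for a pinch: for each consecutive pair of oppositely-signed occurrences of $t$, I test the intervening $G$-word $g_{i}$ for membership in $H$ or in $K$ as appropriate, which is possible precisely because $H$ and $K$ have solvable membership problem in $G$. If no pinch is found and $n \geq 1$, then by Britton's Lemma $w \neq 1$ and I halt with that answer. If a pinch $t g_{i} t^{-1}$ with $\overline{g_{i}} \in H$ is found I replace it by a word representing $\varphi(g_{i}) \in K$, and symmetrically I replace a pinch $t^{-1} g_{i} t$ with $\overline{g_{i}} \in K$ by a word representing $\varphi^{-1}(g_{i}) \in H$; in either case the replacement is absorbed into the adjacent $G$-syllables and the number of occurrences of $t$ strictly decreases. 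Recursing on the shorter word, this process must terminate, either in a reduced word with $n \geq 1$ (hence nontrivial) or in the base case $n = 0$, where $w$ has collapsed to a single $G$-word $g_{0}$ and I decide whether $g_{0} =_{G} 1$.

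The first point needing care is that I must be able to evaluate $\varphi$ (and $\varphi^{-1}$) on an element only known abstractly to lie in $H$ (resp.\ $K$). Since the statement concerns one fixed HNN extension, I may hardcode as finite data a finite generating set $h_{1}, \dots, h_{k}$ of $H$ together with the words $\varphi(h_{1}), \dots, \varphi(h_{k})$, and likewise for $\varphi^{-1}$. Then, to compute $\varphi(g_{i})$ for a word $g_{i}$ already certified to represent an element of $H$, I enumerate all words $u$ in $h_{1}, \dots, h_{k}$ and test each against $g_{i}$; a matching $u$ is found because $\overline{g_{i}} \in H$, and then $\varphi(g_{i})$ is obtained by substituting $\varphi(h_{j})$ for $h_{j}$ in $u$. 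This is where finite generation of $H$ and $K$ is essential.

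The main obstacle, and the point on which the whole argument rests, is that both the base case $g_{0} =_{G} 1$ and the rewriting search above require a solution to the word problem in $G$. I take such a solution as available; it is in any event unavoidable here, since $G$ embeds in $G*_{\varphi}$ and so solvability of the word problem for $G*_{\varphi}$ forces it for $G$ (it is the natural ambient assumption relative to which the membership problems for $H$ and $K$ are posed, the word problem itself being membership in the trivial subgroup). Granting this, correctness follows from Britton's Lemma together with the termination argument above, completing the proof.
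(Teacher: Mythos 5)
Your proof is correct and takes essentially the same approach as the paper: the paper's entire proof is the one-line citation ``immediate from the normal form theorem for HNN extensions,'' and your algorithm (Britton reduction, with the membership oracles detecting pinches, the finite generating data of $H$ and $K$ used to evaluate $\varphi$ and $\varphi^{-1}$, and induction on the number of stable letters) is exactly the standard unpacking of that citation. Your explicit flagging of the need for a word-problem solution for $G$ is apt --- the lemma as stated leaves this hypothesis implicit, and it is genuinely necessary since $G$ embeds in $G*_{\varphi}$ --- so you handle correctly a point the paper glosses over entirely.
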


\begin{proof}
This is immediate from the normal form theorem for HNN extensions (see \cite[$\S$IV Theorem 2.1]{LynSch}).
\end{proof}

The following group construction from \cite{BaumSol} plays an important part in our arguments, as it is an explicit example of a finitely presented non-Hopfian group with solvable word problem.

\begin{defn}\label{BS groups}
The \textit{Baumslag-Solitar groups} $\overline{\BS}(m,n)$ are defined via the following finite presentations, each an HNN extension of $\mathbb{Z}$:
\[
\BS(m,n):= \langle s, t | s^{-1}t^{m}s=t^{n} \rangle 
\]
\end{defn}

\begin{thm}[{Baumslag-Solitar \cite[Theorem 1]{BaumSol}}]\label{BS}
The group $\overline{\BS}(2,3)$ is non-Hopfian and has solvable word problem. The map $\overline{f}: \overline{\BS}(2,3) \to \overline{\BS}(2,3)$ given by extending the map $f: \{s,t\} \to \{s,t\}^{*}$, $f(s)= s$, $f(t)=t^{2}$, is a non-injective epimorphism. The word $[s^{-1}ts, t]$ is non-trivial in $\overline{\BS}(2,3)$ and lies in $\ker(\overline{f})$.
\end{thm}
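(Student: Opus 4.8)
The plan is to exhibit $\overline{\BS}(2,3)$ explicitly as an HNN extension and then verify each assertion in turn. Writing $G=\langle t\rangle \cong \mathbb{Z}$, the presentation $\langle s,t\mid s^{-1}t^{2}s=t^{3}\rangle$ displays $\overline{\BS}(2,3)$ as the HNN extension of $G$ with stable letter $s$ associating $A=\langle t^{2}\rangle$ to $B=\langle t^{3}\rangle$ via $t^{2}\mapsto t^{3}$. Both $A$ and $B$ are finitely generated, mutually isomorphic (each $\cong\mathbb{Z}$), and have solvable membership problem in $\mathbb{Z}$ (one merely tests divisibility of the exponent by $2$ or by $3$). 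Hence Lemma \ref{swp} applies verbatim and yields solvability of the word problem for $\overline{\BS}(2,3)$.

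For the map $\overline{f}$, I would first check it is well defined by confirming that $f$ sends the single relator to a relation. Applying $f$ to $s^{-1}t^{2}s=t^{3}$ gives $s^{-1}t^{4}s=t^{6}$, and this is an honest consequence of the defining relation: squaring $s^{-1}t^{2}s=t^{3}$ yields $s^{-1}t^{4}s=(s^{-1}t^{2}s)(s^{-1}t^{2}s)=t^{6}$. So $f$ extends to an endomorphism $\overline{f}$. Surjectivity is where I would use $\gcd(2,3)=1$: clearly $s\in\image(\overline{f})$ since $\overline{f}(s)=s$, while $t^{2}=\overline{f}(t)\in\image(\overline{f})$ and $t^{3}=s^{-1}t^{2}s=\overline{f}(s^{-1}ts)\in\image(\overline{f})$, whence $t=t^{3}(t^{2})^{-1}=\overline{f}(s^{-1}tst^{-1})\in\image(\overline{f})$. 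Thus the image contains both generators and $\overline{f}$ is onto.

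To see $\overline{f}$ is not injective I would produce a nontrivial kernel element, and the natural candidate is $w:=[s^{-1}ts,t]$. That $w$ lies in $\ker(\overline{f})$ is a short computation: $\overline{f}(w)=[s^{-1}t^{2}s,t^{2}]=[t^{3},t^{2}]$, which is trivial because $t^{2}$ and $t^{3}$ commute in $\langle t\rangle$. The crux --- and the step I expect to be the main obstacle --- is showing $w\neq 1$ in $\overline{\BS}(2,3)$. Here I would appeal to the normal form theorem (Britton's Lemma) for HNN extensions, as in \cite[$\S$IV Theorem 2.1]{LynSch}. Writing $w$ as the alternating word $s^{-1}\,t\,s\,t\,s^{-1}\,t^{-1}\,s\,t^{-1}$, I would check that it contains no pinch: a subword $s^{-1}gs$ with $g\in A=\langle t^{2}\rangle$ or $sgs^{-1}$ with $g\in B=\langle t^{3}\rangle$. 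The only candidates are $s^{-1}ts$, $s\,t\,s^{-1}$ and $s^{-1}t^{-1}s$, and in each case the middle element is $t^{\pm 1}$, which lies in neither $\langle t^{2}\rangle$ nor $\langle t^{3}\rangle$. Since the word is reduced and contains $s^{\pm 1}$, Britton's Lemma forces $w\neq 1$. The same pinch analysis shows $w\neq 1$ in any $\overline{\BS}(m,n)$ with $|m|,|n|\geq 2$, giving the final sentence. Finally, a surjective endomorphism with nontrivial kernel witnesses a proper self-quotient $\overline{\BS}(2,3)/\ker(\overline{f})\cong\overline{\BS}(2,3)$, so the group is non-Hopfian, completing the proof. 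The care required in the HNN conventions --- getting the direction of the associated subgroups right so that the pinch conditions are stated correctly --- is the only genuinely delicate point.
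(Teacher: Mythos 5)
Your proposal is correct, and it is considerably more self-contained than the paper's proof. For the solvable word problem you and the paper do exactly the same thing: exhibit $\overline{\BS}(2,3)$ as an HNN extension of $\mathbb{Z}=\langle t\rangle$ with associated subgroups $\langle t^{2}\rangle\cong\langle t^{3}\rangle$, whose membership problems in $\mathbb{Z}$ are trivially solvable, and invoke Lemma \ref{swp} (the paper also notes Magnus's one-relator theorem as an alternative). For everything else --- well-definedness and surjectivity of $\overline{f}$, the kernel computation $\overline{f}([s^{-1}ts,t])=[t^{3},t^{2}]=1$, and above all the non-triviality of $[s^{-1}ts,t]$ --- the paper gives no argument at all, simply citing Baumslag--Solitar's original Theorem 1, whereas you prove it from scratch. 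Your Britton's Lemma argument is the standard modern one and is carried out correctly: the pinch conditions are stated with the right orientation ($s^{-1}gs$ with $g\in\langle t^{2}\rangle$, or $sgs^{-1}$ with $g\in\langle t^{3}\rangle$, matching the relation $s^{-1}t^{2}s=t^{3}$), and the three candidate subwords of $s^{-1}tsts^{-1}t^{-1}st^{-1}$ all have middle element $t^{\pm1}$, which lies in neither associated subgroup, so the word is Britton-reduced and hence non-trivial. Your route buys self-containedness, plus a bonus: observing that the pinch analysis only needs $|m|,|n|\geq 2$ justifies the theorem's literal claim that $[s^{-1}ts,t]$ is non-trivial in $\overline{\BS}(m,n)$, a generality which in the paper is inherited silently (and somewhat loosely, since $m,n$ are not quantified there) from the citation. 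What the paper's route buys is brevity and an authoritative source for the one genuinely delicate step.
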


\begin{proof}
That $\overline{\BS}(2,3)$ has solvable word problem comes from the fact that it is an HNN extension of $\mathbb{Z}$ (see lemma \ref{swp}, or alternatively a result by Magnus in \cite[$\S$ IV Theorem 5.3]{LynSch}  that all 1-relator groups have solvable word problem). The remainder of the theorem is proved in \cite[Theorem 1]{BaumSol}.
\end{proof}

\subsection{Enumerable processes in groups}

We note some partial algorithms and recursively enumerable sets, in the context of group presentations. These are standard results, which we state (without proof) for the convenience of any reader not familiar with the area.

\begin{lem}\label{words}
Let $P=\langle X | R \rangle$ be a recursive presentation. Then the words in $X^{*}$ which represent the identity in $\overline{P}$ are recursively enumerable. Moreover, this algorithm is uniform over all recursive presentations.
\end{lem}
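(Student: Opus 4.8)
The plan is to show that the set of words representing the identity in $\overline{P}$ is recursively enumerable by exhibiting an explicit semi-decision procedure, and then to observe that this procedure is built uniformly from the presentation data. Recall that for a recursive presentation $P = \langle X | R \rangle$ the generating set $X$ is finite and $R$ is a recursive enumeration of relators. A word $w \in X^{*}$ represents the identity in $\overline{P}$ precisely when $w$ lies in the normal closure $\llangle R \rrangle$ of the relators in the free group on $X$; equivalently, $w$ is identity in $\overline{P}$ if and only if $w$ is freely equal to a finite product of conjugates of elements of $R \cup R^{-1}$. This characterisation is the standard one and is exactly what the enumeration will hunt for.

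First I would fix, once and for all, a computable enumeration of all finite formal products of the shape $\prod_{k=1}^{N} u_{k} r_{k}^{\epsilon_{k}} u_{k}^{-1}$, where each $u_{k} \in X^{*}$, each $r_{k}$ is drawn from the relators enumerated by $R$, and each $\epsilon_{k} \in \{+1, -1\}$. Since $X$ is finite, $X^{*}$ is computably enumerable; since $R$ is a recursive enumeration, we can computably access its members; and there are only countably many such finite products, so by Cantor pairing (as recalled in the preliminaries) we obtain a single computable list of candidate products. Next, for each candidate product $v$ in this list, I would freely reduce both $v$ and the target word $w$ and test whether the two reduced words coincide letter-for-letter in the free group on $X$, a decidable check. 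The enumeration procedure for a single $w$ then dovetails over all candidate products, halting and declaring $w \sim 1$ as soon as a match is found. Running this process simultaneously over all $w \in X^{*}$ (again by dovetailing) yields a recursive enumeration of precisely the words that equal the identity in $\overline{P}$: any such word eventually appears because some finite conjugate-product witnesses its triviality, and no non-trivial word ever appears because a match genuinely certifies freely-equal-to-identity.

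For the uniformity claim, I would emphasise that nothing in the above construction depends on $P$ beyond the finite list $X$ and an index for the recursive enumeration $R$. The candidate-product enumerator, the free-reduction routine, and the dovetailing schedule are all described by a single algorithm taking the presentation (encoded as $X$ together with an index for $R$) as an additional input. Invoking the $s$-$m$-$n$ theorem (Theorem \ref{smn}) converts this into a recursive function producing, from the presentation data, an index for the r.e.\ set of trivial words; this is exactly the asserted uniformity over all recursive presentations.

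The only genuine subtlety, rather than an obstacle, is ensuring the enumeration is \emph{complete} in both directions: that every trivial word is eventually listed and that no non-trivial word slips in. Completeness in the forward direction rests on the fact that triviality in $\overline{P}$ is \emph{equivalent} to being a finite product of conjugates of relators, so a witness always exists and, by exhaustive dovetailing, is always found; soundness in the reverse direction is automatic since a letter-for-letter match in the reduced free group is a bona fide proof of triviality. I do not expect any computability difficulty here, since we are only claiming recursive enumerability (a one-sided semi-decision), not decidability; indeed decidability would be false in general, as the unsolvability of the word problem shows. The enumeration never has to certify non-triviality, which is what keeps the whole construction effective.
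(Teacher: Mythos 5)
Your proposal is correct and follows essentially the same route as the paper's own proof: characterise the trivial words as those freely equal to finite products of conjugates of relators, enumerate all such products, and dovetail. Your version is somewhat more explicit than the paper's (the free-reduction comparison and the $s$-$m$-$n$ appeal for uniformity are spelled out rather than left implicit), but the underlying argument is identical.
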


\begin{lem}\label{hom check}
There is a partial algorithm that, on input of two finite presentations $P=\langle X | R \rangle$ and $Q=\langle Y | S \rangle$, and a set map $\phi: X \to Y^{*}$, halts if and only if $\phi$ extends to a homomorphism $\overline{\phi}: \overline{P} \to \overline{Q}$.
\end{lem}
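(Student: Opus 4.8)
The plan is to reduce the question of whether $\phi$ extends to a homomorphism to finitely many instances of the (only semi-decidable) triviality problem in $\overline{Q}$, and then to exploit Lemma \ref{words}. Recall the universal property of a group presentation: the set map $\phi: X \to Y^{*}$ determines a unique homomorphism from the free group $F(X)$ to $\overline{Q}$ (compose the free extension $F(X) \to F(Y)$ with the quotient $F(Y) \to \overline{Q}$), and this descends to a well-defined homomorphism $\overline{\phi}: \overline{P} \to \overline{Q}$ precisely when every relator of $P$ is sent to the identity, i.e. when $\phi(r)$ represents $1$ in $\overline{Q}$ for each $r \in R$. Since $P$ is a finite presentation, $R = \{r_{1}, \ldots, r_{k}\}$ is finite, so the condition to be tested is a finite conjunction.

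First I would, for each $r_{j} \in R$, compute the word $\phi(r_{j}) \in Y^{*}$ by substituting $\phi(x)$ for each occurrence of a generator $x$ appearing in $r_{j}$; this is a purely mechanical rewriting step. Next I would invoke Lemma \ref{words} applied to the recursive presentation $Q$: the words of $Y^{*}$ representing the identity in $\overline{Q}$ are recursively enumerable. Running this enumeration, I would wait for each of the finitely many words $\phi(r_{1}), \ldots, \phi(r_{k})$ to appear, and declare the algorithm to halt exactly when all $k$ of them have been enumerated.

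It then remains to verify the two directions of the biconditional. If $\phi$ extends to a homomorphism then every $\phi(r_{j})$ is trivial in $\overline{Q}$, hence each eventually appears in the enumeration, so all $k$ searches terminate and the algorithm halts. Conversely, if the algorithm halts then every $\phi(r_{j})$ was found in the enumeration of trivial words, so each is trivial in $\overline{Q}$, and by the universal property above $\phi$ extends to $\overline{\phi}$.

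The point I expect to require the most care, rather than being a genuine obstacle, is the asymmetry in the procedure: because the word problem for $\overline{Q}$ need not be solvable, we cannot in general decide whether a given $\phi(r_{j})$ is \emph{non}-trivial, and so we cannot positively detect failure of the homomorphism condition. This is exactly why the statement asks only for a \emph{partial} algorithm: when $\phi$ does not extend, at least one $\phi(r_{j})$ is non-trivial in $\overline{Q}$, never appears among the trivial words, and the corresponding search runs forever — which is precisely the required non-halting behaviour.
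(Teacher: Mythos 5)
Your proposal is correct and follows essentially the same route as the paper's proof: substitute $\phi$ into each of the finitely many relators of $P$, use Lemma \ref{words} to enumerate the trivial words of $\overline{Q}$, and halt once every image $\phi(r_j)$ has appeared, which by von Dyck's theorem (the universal property of presentations) is exactly the condition for $\phi$ to extend to a homomorphism. The paper states this in two lines; your write-up merely makes the universal-property justification and the semi-decidability asymmetry explicit.
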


\begin{lem}\label{isom check}
There is a partial algorithm that, on input of two finite presentations $P=\langle X | R \rangle$ and $Q=\langle Y | S \rangle$, halts if and only if $\overline{P} \cong \overline{Q}$, and outputs an isomorphism between them.
\end{lem}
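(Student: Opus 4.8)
The plan is to turn the existence of an isomorphism into a semi-decidable search over candidate pairs of set maps, using the partial algorithms established above as building blocks. The key observation is that an isomorphism $\overline{P}\cong\overline{Q}$ is witnessed by a pair of set maps $\phi:X\to Y^{*}$ and $\psi:Y\to X^{*}$ with the properties that (i) $\phi$ extends to a homomorphism $\overline{\phi}:\overline{P}\to\overline{Q}$, (ii) $\psi$ extends to a homomorphism $\overline{\psi}:\overline{Q}\to\overline{P}$, and (iii) the two extensions are mutually inverse, i.e.\ $\overline{\psi}\circ\overline{\phi}=\id_{\overline{P}}$ and $\overline{\phi}\circ\overline{\psi}=\id_{\overline{Q}}$. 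Conversely, any such pair plainly realises an isomorphism, so (i)--(iii) exactly characterise the data we wish to output.

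Next I would check that each of (i)--(iii) is semi-decidable for a fixed candidate pair $(\phi,\psi)$. Conditions (i) and (ii) are each handled directly by lemma \ref{hom check}, which halts precisely when the relevant map extends to a homomorphism. For (iii), note that $\overline{\psi}\circ\overline{\phi}$ is the extension of the set map $x\mapsto\psi(\phi(x))$, so $\overline{\psi}\circ\overline{\phi}=\id_{\overline{P}}$ holds if and only if the word $\psi(\phi(x))\,x^{-1}$ represents the identity in $\overline{P}$ for every $x\in X$; since $X$ is finite and, by lemma \ref{words}, the words representing the identity in $\overline{P}$ are recursively enumerable, this is a finite conjunction of semi-decision procedures. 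The analogous test for $\overline{\phi}\circ\overline{\psi}=\id_{\overline{Q}}$ uses lemma \ref{words} applied to $\overline{Q}$. Thus for each fixed pair the conjunction of (i), (ii) and (iii) is a partial algorithm that halts if and only if $(\phi,\psi)$ witnesses an isomorphism.

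Finally I would dovetail. Since $X$ and $Y$ are finite and $X^{*},Y^{*}$ are recursively enumerable, the set of candidate pairs $(\phi,\psi)$ is countable and can be enumerated effectively. Running the halting test of the previous paragraph in parallel over all pairs, the overall procedure halts as soon as some pair passes every check, at which point it outputs that pair as the desired isomorphism. If it halts then by construction $\overline{\phi}$ and $\overline{\psi}$ are mutually inverse isomorphisms, so $\overline{P}\cong\overline{Q}$. The only point needing a little care---and the step I expect to be the main (if mild) obstacle---is completeness: I must verify that whenever $\overline{P}\cong\overline{Q}$ some candidate pair does pass. Given an abstract isomorphism $\theta:\overline{P}\to\overline{Q}$, one chooses for each $x\in X$ a word $\phi(x)\in Y^{*}$ representing $\theta(x)$ and for each $y\in Y$ a word $\psi(y)\in X^{*}$ representing $\theta^{-1}(y)$; then $\overline{\phi}=\theta$ and $\overline{\psi}=\theta^{-1}$, conditions (i)--(iii) hold as genuine group identities, and the corresponding halting tests therefore all terminate. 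Hence the algorithm halts precisely when the two groups are isomorphic, as required.
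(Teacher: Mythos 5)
Your proposal is correct and takes essentially the same approach as the paper's own proof: enumerate all candidate set maps $\phi: X \to Y^{*}$ and $\psi: Y \to X^{*}$, semi-decide via lemma \ref{hom check} that each extends to a homomorphism, semi-decide via lemma \ref{words} that the two compositions fix every generator, and dovetail these checks over all pairs in parallel. The only (minor) difference is that you make the completeness direction explicit---that an abstract isomorphism yields a witnessing pair of words on which all tests halt---which the paper leaves implicit.
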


It is important to note that lemma \ref{isom check} does not hold if we instead consider recursive presentations. In fact, even if we start with one recursive presentation, and one finite presentation, the lemma does not hold; we show this later as lemma \ref{pre main}.

\begin{lem}
Let $P=\langle X | R \rangle$ be a finite presentation. Then the set of finite presentations defining groups isomorphic to $\overline{P}$ is recursively enumerable. Moreover, this enumeration algorithm is uniform over all finite presentations.
\end{lem}

\section{Initial questions and observations}\label{here}

\subsection{The finite presentation problem}

Following \cite[Definition 1.1]{BriWil}, we say that the \emph{finite presentation problem} for a finitely presented group $\overline{\langle X | R \rangle}$ is solvable if there is a partial algorithm that, on input of a finite set of words $w_{1}, \ldots, w_{n} \in X^{*}$ such that $\langle w_{1}, \ldots, w_{n} \rangle ^{\overline{P}}$ is finitely presentable, outputs a finite presentation $Q$ for $\langle w_{1}, \ldots, w_{n} \rangle ^{\overline{P}}$. By definition, a finitely presented weakly effectively coherent group has solvable finite presentation problem. The existence of a finitely presented group with unsolvable word problem immediately gives rise to the following obsevation:

\begin{lem}\label{fp prob}
There is a finite presentation $P=\langle X | R \rangle$ of a group for which the finite presentation problem is unsolvable.
\end{lem}

\begin{proof}
Take $P=\langle X | R \rangle$ to be a finite presentation of a group with unsolvable word problem (see \cite[Lemma 12.7]{Rot}), and suppose this has an algorithm to solve the finite presentation problem. Note that for any word $w \in X^{*}$, we have that $\langle w \rangle ^{\overline{P}}$ is cyclic, and hence finitely presentable. So, on input of a single word $w$, the algorithm will output a finite presentation $Q$ of the cyclic group $\langle w \rangle ^{\overline{P}}$. But the isomorphism problem for finitely presented abelian groups is solvable (see \cite{Mille-92} p.~31), so we can decide if $\overline{Q}$ is trivial, and hence if $w$ is trivial in $\overline{P}$, which is impossible since $\overline{P}$ has unsolvable word problem.
\end{proof}

\begin{rem}
As pointed out to the author by Chuck Miller, the work of Collins in \cite{Collins} shows that the group in lemma \ref{fp prob} can be taken to have solvable word problem. This is because such an algorithm for the finite presentation problem would explicitly solve the \emph{order problem}, of deciding what the order of an element of the group is, and a direct consequence of \cite[Theorem A]{Collins} is that there is a finitely presented group with solvable word problem and unsolvable order problem.
\end{rem}

Bridson and Wilton \cite{BriWil} give an in-depth analysis of the finite presentation problem, showing that it is not uniformly solvable in several classes of finitely presented groups \cite[Corollary C]{BriWil}. Moreover, they construct a finite presentation of a group with polynomial Dehn function (and hence solvable word problem) which has unsolvable finite presentation problem \cite[Theorem E]{BriWil}.

Having seen that the finite presentation problem is unsolvable in general, we shift our attention to similar  questions. By considering the trivial group, and the fact that the triviality problem is undecidable, one can show that there is no algorithm that, given two finite presentations $P,Q$, determines if $\overline{P}$ embeds in $\overline{Q}$ or not. The following two stronger results were obtained in \cite{Chiodo}, and are closely related to the subgroup identification problem.

\begin{thm}[{Chiodo \cite[Theorem 6.8]{Chiodo}}]
There is a finitely presented group $G$ such that the set of finite presentations of groups which embed into $G$ is not recursively enumerable.
\end{thm}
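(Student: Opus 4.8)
The plan is to establish non-enumerability by a \emph{reduction}: exhibit a fixed finitely presented group $G$ together with a computable family $w \mapsto K_{w}$ of finite presentations, indexed by the words of some auxiliary group, such that $\overline{K_{w}}$ embeds into $G$ if and only if $w$ satisfies a condition that is known to be \emph{not} recursively enumerable. Since $w \mapsto K_{w}$ is computable, a recursive enumeration of $\{P : \overline{P} \hookrightarrow G\}$ would pull back along this map to a recursive enumeration of that condition, a contradiction; this forces the embedding set to be non-r.e. The natural non-r.e.\ source is the \emph{complement of the word problem}: fixing a finite presentation $U=\langle X \mid R\rangle$ of a group with unsolvable word problem (see \cite[Lemma 12.7]{Rot}), the set $\{w \in X^{*} : w \neq_{\overline{U}} 1\}$ is co-r.e.\ but not r.e., precisely because the word problem is r.e.\ (Lemma \ref{words}) but not recursive, so a set and its complement cannot both be r.e.

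For the target I would take $G$ to be the free group $F_{2}=\overline{\langle a,b \mid \ \rangle}$, which is finitely presented and, crucially, \emph{not universal}. By the Nielsen--Schreier theorem every subgroup of $F_{2}$ is free, so for a finitely generated group ``$\overline{K_{w}}$ embeds into $F_{2}$'' is equivalent to ``$\overline{K_{w}}$ is free'', and $\mathbb{Z}^{2}$ is a convenient finitely presented witness that does \emph{not} embed. The construction I would carry out is a Rabin-style sequence of HNN extensions and amalgamated products built from $U$ and $w$ (the Adian--Rabin machinery, cf.\ \cite[$\S 12$]{Rot} and the survey \cite{Mille-92}), engineered so that $\overline{K_{w}}$ collapses to a free group when $w \neq_{\overline{U}} 1$, while a $\mathbb{Z}^{2}$-gadget is switched on --- destroying freeness --- exactly when $w =_{\overline{U}} 1$. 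Granting such a family, $\overline{K_{w}} \hookrightarrow F_{2} \iff w \neq_{\overline{U}} 1$, and the reduction above yields the theorem.

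The step I expect to be the main obstacle is getting the \emph{direction} of the reduction right. A careless application of Adian--Rabin produces a family for which the property holds if and only if $w =_{\overline{U}} 1$; since $\{w : w =_{\overline{U}} 1\}$ is r.e., this would establish only that the embedding set is non-recursive, which is strictly weaker than the claim. To land embeddability on the co-r.e.\ side one must arrange that the \emph{non}-embeddable witness $\mathbb{Z}^{2}$ is the feature activated by the triviality of $w$, rather than the embeddable (free) quotient, and verify that no unintended relations make $\overline{K_{w}}$ free when $w =_{\overline{U}} 1$. As a feasibility check I would note that, even when $G$ has solvable word problem, ``$\overline{K_{w}}$ embeds into $G$'' unwinds to ``there is a set map $\phi$ extending to a homomorphism (a recursively enumerable condition, by Lemmas \ref{words} and \ref{hom check}) which is moreover injective (a co-enumerable, or higher, condition)''; embeddability therefore sits strictly above the r.e.\ level, so that reducing from a non-r.e.\ source is exactly the right target.

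An alternative route, closer to the machinery of this paper, would encode finiteness rather than the word problem. Using the Higman embedding theorem \cite[Theorem 1]{Hig emb} with attention to uniformity, one would convert, for each $n$, a recursive presentation that adjoins the depth-detecting elements $w_{i}$ of the non-Hopfian group $\overline{\BS}(2,3)$ (whose vanishing under the self-maps is governed by Corollary \ref{word}) into a single finite presentation $P_{n}$, arranged so that $\overline{P_{n}}$ embeds into a fixed $G$ if and only if $|W_{n}|=\infty$. Since $\{n : |W_{n}|=\infty\}$ is not r.e.\ (in the same spirit as Lemma \ref{finite re}), this again forces the embedding set to be non-r.e.
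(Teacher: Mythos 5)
Note first that the paper does not prove this statement; it is imported verbatim from \cite[Theorem 6.8]{Chiodo}, so your proposal can only be judged on its own merits. Unfortunately your main route has a fatal, and instructive, flaw: with target $G=F_{2}$ the theorem is \emph{false}. The set of finite presentations of groups embedding into $F_{2}$ is recursively enumerable. Indeed, by Nielsen--Schreier a finitely generated group embeds into $F_{2}$ if and only if it is free of finite rank, and if $P=\langle Y \mid S\rangle$ has $n$ generators then $\overline{P}\hookrightarrow F_{2}$ if and only if $\overline{P}\cong F_{k}$ for some $k\leq n$ (the rank of a free group is its minimal number of generators, by abelianising). Each condition $\overline{P}\cong F_{k}$ is semi-decidable by lemma \ref{isom check}, applied to $P$ and the finite presentation $\langle y_{1},\ldots,y_{k} \mid \ \rangle$, and a finite union of r.e.\ conditions is r.e. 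Consequently no computable family $w\mapsto K_{w}$ with $\overline{K_{w}}\hookrightarrow F_{2}$ exactly when $w\neq 1$ in $\overline{U}$ can exist, since it would exhibit the non-r.e.\ set $\{w : w\neq_{\overline{U}} 1\}$ as the preimage of an r.e.\ set under a computable map. So the difficulty you flag as ``the main obstacle'' --- reversing the Adian--Rabin collapse so that freeness is switched on exactly when $w\neq 1$ --- is not an engineering problem but a provable impossibility: ``presents a free group'' is itself an r.e.\ property of finite presentations, so $\{w : \overline{K_{w}}\textnormal{ free}\}$ is r.e.\ for \emph{any} computable family and can never equal $\{w : w\neq_{\overline{U}} 1\}$. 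Your ``feasibility check'' fails for the same reason: the injectivity clause in the naive logical form of embeddability does not place the problem above the r.e.\ level for a particular target; for $F_{2}$ (and likewise $\mathbb{Z}^{n}$, and any $G$ whose finitely generated subgroups range over a computably enumerable list of isomorphism classes of finitely presentable groups) injectivity is subsumed by an isomorphism check against finitely many candidate subgroups.

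Your alternative route has the right shape --- the content of the theorem is precisely that $G$ must be built so that the isomorphism types of its finitely generated subgroups are \emph{not} enumerable --- but as written it is a wish rather than a proof, and the $\BS(2,3)$ gadgetry cannot supply the mechanism: because $\overline{\BS}(2,3)$ is non-Hopfian, all the relevant quotients $\overline{\BS}(2,3)/\ker(\overline{f^{i}})$ are isomorphic to one another, so which of the words $w_{i}$ have died is invisible at the level of abstract isomorphism type, hence also at the level of embeddability into a fixed $G$. A mechanism that does work, and is in the spirit of the techniques of \cite{Chiodo}, is torsion. Fix distinct primes $p_{0},p_{1},\ldots$, let $K$ be the halting set, and let $H$ be the (infinite) recursive presentation with generators $x_{0},x_{1},\ldots$, relators $x_{n}^{p_{n}^{2}}$ for all $n$, together with $x_{n}^{p_{n}}$ enumerated as a relator whenever $n$ is enumerated into $K$; then $\mathbb{Z}_{p_{n}^{2}}$ embeds in $\overline{H}$ if and only if $n\notin K$, a co-r.e.\ but not r.e.\ condition. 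Now embed $\overline{H}$ into a finitely presented group $G$ via Higman \cite{Hig emb}, \emph{verifying that the construction (built from free products, amalgams and HNN extensions) creates no new torsion}, so that $\mathbb{Z}_{p_{n}^{2}}\hookrightarrow G$ if and only if $n\notin K$. Since $n\mapsto \langle x \mid x^{p_{n}^{2}}\rangle$ is computable, an enumeration of the finite presentations of groups embedding in $G$ would yield an enumeration of the complement of $K$, a contradiction. The torsion-preservation step is where the real work lies, and it is exactly the kind of control your proposal never provides.
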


\begin{thm}[{Chiodo \cite[Theorem 6.6]{Chiodo}}]\label{chi1}
There is no algorithm that, on input of two finite presentations $P=\langle X|R\rangle$, $Q=\langle Y|S\rangle$ such that $\overline{P}$ embeds in $\overline{Q}$, outputs an explicit map $\phi: X \to Y^{*}$ which extends to an embedding $\overline{\phi}: \overline{P} \hookrightarrow \overline{Q}$.
\end{thm}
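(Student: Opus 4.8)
The plan is to argue by contradiction, reducing a known–unsolvable problem to the hypothetical map-constructing algorithm. Suppose such an algorithm $\mathcal{A}$ existed: on input of any pair of finite presentations $P=\langle X|R\rangle$, $Q=\langle Y|S\rangle$ for which $\overline{P}$ is \emph{promised} to embed in $\overline{Q}$, it halts and returns a set map $\phi:X\to Y^{*}$ extending to an embedding $\overline{\phi}:\overline{P}\hookrightarrow\overline{Q}$. The conceptual point is that an explicit embedding transports information backwards along $\overline{\phi}$: once the words $\phi(x)\in Y^{*}$ are known, any question about the source that is legible in the target becomes answerable. I would therefore fix a finite presentation $G=\langle X_{0}|R_{0}\rangle$ of a group for which some natural decision problem is unsolvable — either a group with unsolvable word problem \cite[Lemma 12.7]{Rot}, or, if one wants the target to retain solvable word problem, a group with solvable word problem but unsolvable order problem \cite[Theorem A]{Collins} — and build a recursive family $w\mapsto(P_{w},Q_{w})$, depending recursively on a word $w\in X_{0}^{*}$, with the property that $\overline{P_{w}}\hookrightarrow\overline{Q_{w}}$ holds for \emph{every} $w$ while the \emph{content} of any embedding reveals the undecidable bit attached to $w$ (for instance whether $w=_{G}e$, or whether $w$ has infinite order). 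Feeding each $(P_{w},Q_{w})$ to $\mathcal{A}$ and post-composing with the read-off procedure would then give a total recursive decision procedure for the chosen problem on $G$, the desired contradiction.

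The gadget attaching $w$ to the pair would be an HNN or free-product construction built over $G$, so that Britton's lemma and Kurosh-type normal forms pin down the image of $\overline{P_{w}}$ and make the relevant feature \emph{syntactically} legible: the read-off should inspect which normal-form stratum of $\overline{Q_{w}}$ (e.g.\ whether the output word genuinely involves a stable letter that conjugates a nontrivial associated subgroup) the image occupies, rather than attempting to solve a hard word problem in the target. It is instructive to contrast this with the case of an isomorphism between two \emph{finite} presentations, where Lemma \ref{isom check} shows that the analogous map \emph{can} be constructed uniformly: there one enumerates candidate set maps in both directions and certifies a genuine isomorphism by exhibiting a two-sided inverse, each half being a semi-decidable homomorphism check via Lemma \ref{hom check} and Lemma \ref{words}. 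The reason that strategy collapses for embeddings is precisely that \emph{injectivity of a single homomorphism cannot be certified by a finite search} — being a homomorphism is semi-decidable, being injective is not — so one cannot enumerate-and-verify candidate embeddings. This asymmetry is the conceptual source of the impossibility, and it is the feature the family must exploit.

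The hard part, and the step I expect to be the main obstacle, is the \emph{rigidity} of the gadget. It is not enough that some canonical embedding exposes the undecidable bit: since $\mathcal{A}$ is entitled to return \emph{any} valid embedding, I must ensure that \emph{every} injective $\overline{\phi}:\overline{P_{w}}\hookrightarrow\overline{Q_{w}}$ is forced, up to the detectable data, into the $w$-dependent part of $\overline{Q_{w}}$ and hence exhibits the same bit — and this must hold \emph{simultaneously} with the guarantee that the embedding exists for all $w$, so that the undecidable quantity lives in the content of $\phi$ rather than in its mere existence (which I may not test, since $\mathcal{A}$ carries no obligation on inputs where the promise fails). Controlling all embeddings in this way, typically by choosing the source to be freely indecomposable and non-free and the target a free product or HNN extension whose factors are tailored so that the source can only land in one conjugacy class of subgroups, is the delicate technical point on which the whole argument turns.
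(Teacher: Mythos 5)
The paper does not prove this theorem at all: it is imported verbatim from \cite[Theorem 6.6]{Chiodo}, and the only trace of its proof here is the remark immediately following the statement, that the difficulty is ``not definitely knowing a set of target words for an injection.'' Measured on its own terms, your proposal has a genuine gap, and it is exactly the one you flag yourself. The entire content of the theorem lies in constructing the recursive family $w \mapsto (P_{w},Q_{w})$ in which an embedding is guaranteed to exist for every $w$, yet every valid embedding betrays the undecidable bit attached to $w$; the surrounding contradiction argument (run $\mathcal{A}$, decode, contradict unsolvability) is routine. You describe the required rigidity property, correctly observe that it must coexist with the existence promise, call it ``the delicate technical point on which the whole argument turns'' --- and stop there. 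A reduction template with its gadget left unbuilt is a plan, not a proof. There is also a second unaddressed computability issue in your decoding step: to inspect ``which normal-form stratum'' of $\overline{Q_{w}}$ the output words occupy, you must put arbitrary words of $Y^{*}$ into Britton or Kurosh normal form, and that is itself a computation in a group built over a group with unsolvable word problem; declaring the read-off ``syntactic'' does not make it recursive.

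It is worth knowing that the proof in \cite{Chiodo} sidesteps the rigidity problem entirely, and you could too. Take $P=\langle x \mid \ \rangle$, a presentation of $\mathbb{Z}$. A set map $\phi:\{x\}\to Y^{*}$ extending to an embedding is then nothing more than a single word of infinite order in $\overline{Q}$, so a hypothetical algorithm would, on input any finite presentation $Q$ of a non-trivial torsion-free group (for which the promise $\mathbb{Z}\hookrightarrow\overline{Q}$ automatically holds), output a word that is in particular non-trivial in $\overline{Q}$. This contradicts the impossibility, established in \cite{Chiodo}, of algorithmically producing a non-trivial element from a finite presentation of a group promised to be non-trivial (and torsion-free). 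With the source fixed as $\mathbb{Z}$ there is no bit to decode and no need to control the set of all embeddings: producing \emph{any} embedding at all is already the uncomputable task, which is precisely what the paper's remark about ``not knowing a set of target words'' is pointing at. Your contrast with lemma \ref{isom check} --- homomorphism checking is semi-decidable, injectivity is not, so enumerate-and-verify collapses --- is correct and is indeed the conceptual heart of the matter; but turning that observation into a theorem requires the non-trivial-element result, not a rigidity gadget.
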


In the proof of theorem \ref{chi1}, as found in \cite{Chiodo}, we see that the algorithmic problem arises from not definitely knowing a set of target words for an injection from $\overline{P}$ into $\overline{Q}$. Knowing which elements $\overline{P}$ maps to in $\overline{Q}$ brings us precisely to the subgroup identification problem.

\subsection{The subgroup identification problem for Hopfian groups}

The following is a standard result about finitely presentable groups.

\begin{lem}\label{truncate}
Let $ \langle X|R \rangle$ be a recursive presentation of a finitely presentable group. Then there is a finite subset $R'\subseteq R$ such that $\overline{\langle X|R \rangle} \cong \overline{\langle X|R' \rangle}$ via extention of the identity map on $X$. That is, there is a finite truncation $R'$ of $R$ such that all other relations are a consequence of the first $R'$.
\end{lem}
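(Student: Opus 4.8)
The plan is to reduce the statement to a purely group-theoretic fact about normal closures. Write $F=F(X)$ for the free group on the finite set $X$ and $N=\llangle R\rrangle^{F}$ for the kernel of the canonical surjection $F\to\overline{\langle X|R\rangle}$. It suffices to produce a finite subset $R'\subseteq R$ with $\llangle R'\rrangle^{F}=N$, since then $\overline{\langle X|R'\rangle}=F/\llangle R'\rrangle^{F}=F/N=\overline{\langle X|R\rangle}$ as quotients of $F$, and the identity on $X$ is the desired isomorphism. I note in passing that recursiveness of $R$ plays no role in the argument; what is used is only that $X$ is finite (the statement is false for infinite $X$) and that the group is finitely presentable.

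First I would establish that $G:=\overline{\langle X|R\rangle}$ admits a finite presentation \emph{on the same generating set} $X$, with the identity map on $X$ inducing the isomorphism; equivalently, that there is a finite set $S'\subseteq F$ with $\llangle S'\rrangle^{F}=N$. This is the standard fact that finite presentability is independent of the choice of finite generating set, and I would prove it by Tietze transformations. Starting from any finite presentation $\langle Y|S\rangle\cong G$ (which exists since $G$ is finitely presentable), fix an isomorphism and choose words $p_{i}\in Y^{*}$ representing the images of the $x_{i}$ and words $q_{j}\in X^{*}$ representing the preimages of the $y_{j}$. Adjoining the generators $X$ with defining relations $x_{i}=p_{i}(Y)$ (type $\ti 3$) keeps the presented group equal to $G$ with each symbol $x_{i}$ representing the original $x_{i}$; then adjoining the consequence relations $y_{j}=q_{j}(X)$ (type $\ti 1$, valid since $X$ generates $G$) and eliminating each $y_{j}$ (type $\ti 4$) produces a finite presentation $\langle X|S'\rangle$ of $G$ on the generators $X$ in which $x_{i}\mapsto x_{i}$, as required.

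With $S'$ in hand the finishing step is a routine normal-closure computation. Since $\llangle S'\rrangle^{F}=N=\llangle R\rrangle^{F}$, each of the finitely many relators $s\in S'$ lies in $\llangle R\rrangle^{F}$ and is therefore expressible as a finite product of conjugates of elements of $R^{\pm 1}$. Collecting the finitely many members of $R$ that occur across these finitely many expressions yields a finite subset $R'\subseteq R$ with $S'\subseteq\llangle R'\rrangle^{F}$. Then $N=\llangle S'\rrangle^{F}\subseteq\llangle R'\rrangle^{F}\subseteq\llangle R\rrangle^{F}=N$, forcing $\llangle R'\rrangle^{F}=N$, which is exactly what was needed.

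I expect the main obstacle to be the first step: carefully verifying that the Tietze manipulations can be arranged so that the final generators are literally the original symbols $x_{i}$ and the resulting isomorphism restricts to the identity on $X$, rather than merely producing \emph{some} finite presentation of an abstractly isomorphic group. By contrast, the concluding normal-closure argument is entirely mechanical, and, as noted above, no computability hypotheses enter anywhere.
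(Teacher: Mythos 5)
Your proof is correct. A point of comparison worth knowing: the paper states Lemma \ref{truncate} as ``a standard result about finitely presentable groups'' and gives no proof at all, so there is no argument of the paper's to measure yours against; what you have written is the canonical argument that the paper implicitly defers to. Your two steps are exactly the standard ones: first, B.~H.~Neumann's observation that a finitely presentable group has a finite presentation \emph{on any given finite generating set}, with the identification induced by the identity on that set (your Tietze-transformation argument for this is carried out correctly, and you are right that this is the only step requiring care --- the isomorphism must be tracked so that it restricts to the identity on $X$, not merely an abstract isomorphism); second, the ``compactness'' property of normal closures, namely that each of the finitely many relators in $S'$ lies in $\llangle R\rrangle^{F}$ and hence is a product of conjugates of finitely many elements of $R$, giving the finite truncation $R'$ with $\llangle S'\rrangle^{F}\subseteq\llangle R'\rrangle^{F}\subseteq\llangle R\rrangle^{F}$ and therefore equality. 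Your side remarks are also accurate and relevant to how the lemma is used in the paper: recursiveness of $R$ is irrelevant (only finiteness of $X$ and finite presentability of the group matter), and the statement genuinely fails for infinite $X$, since killing only finitely many relators would leave infinitely many generators as free factors, contradicting finite generation. Indeed, the non-constructive character of this lemma --- the truncation point exists but cannot be found algorithmically --- is precisely what the paper later exploits in Corollary \ref{find m}.
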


The following result could lead us to think that all finitely presented groups have solvable subgroup identification problem, and we urge the reader to study this carefully to become convinced that it is not what is proved.

\begin{prop}\label{almost}
There is a uniform partial algorithm that, on input of: A finite presentation $P=\langle X | R \rangle$ of a group, and a finite set of words $w_{1}, \ldots, w_{n} \in X^{*} $ such that $\langle w_{1}, \ldots, w_{n} \rangle^{\overline{P}}$ is finitely presentable, and $Q=\langle Y | S \rangle$ a finite presentation such that $\overline{Q}\cong \langle w_{1}, \ldots, w_{n}\rangle ^{\overline{P}}$;
\\Outputs: A finite set of words $c_{1}, \ldots, c_{k} \in \{w_{1}, \ldots, w_{n}\}^{*}$ such that each $c_{i}$ is trivial in $\overline{P}$ (when viewed as a word in $X^{*}$) and $\overline{\langle w_{1}, \ldots, w_{n} | c_{1}, \ldots, c_{k} \rangle}$ is isomorphic to $\overline{Q}$ and hence also to $\langle w_{1}, \ldots, w_{n} \rangle ^{\overline{P}}$.
\end{prop}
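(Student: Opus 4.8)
The plan is to write down the tautological presentation of $H := \langle w_{1}, \ldots, w_{n} \rangle^{\overline{P}}$ on the given generating words, recursively enumerate its relators, and then use the finite presentation $Q$ as a halting certificate to truncate this recursive presentation to a finite one. First I would regard $w_{1}, \ldots, w_{n}$ as abstract letters and consider the presentation $\langle w_{1}, \ldots, w_{n} \mid N \rangle$, where $N$ is the set of all words $c \in \{w_{1}, \ldots, w_{n}\}^{*}$ that become trivial in $\overline{P}$ after substituting each $w_{i}$ by the corresponding word of $X^{*}$. The substitution map realises the map from the free group on $\{w_{1},\ldots,w_{n}\}$ onto $H$ whose kernel is precisely the (already normal) set $N$, so $\overline{\langle w_{1},\ldots,w_{n} \mid N \rangle} \cong H$. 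By Lemma \ref{words} the identity words of $\overline{P}$ are recursively enumerable, whence so is $N$ (enumerate candidate words, substitute, and test for triviality), giving an effective listing $c_{1}, c_{2}, \ldots$ of $N$.

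Since $H$ is finitely presentable and $\langle w_{1},\ldots,w_{n} \mid N \rangle$ is a recursive presentation of it, Lemma \ref{truncate} supplies a finite subset of $N$ over which the identity map on generators already presents $H$. This finite subset lies inside some initial segment $\{c_{1}, \ldots, c_{k_{0}}\}$ of the enumeration, and because every remaining listed relator is then a consequence of the first $k_{0}$, we get $\overline{\langle w_{1},\ldots,w_{n} \mid c_{1},\ldots,c_{k} \rangle} \cong H$ for every $k \geq k_{0}$.

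The algorithm is now to dovetail over $k = 1, 2, \ldots$, running the partial isomorphism tester of Lemma \ref{isom check} on the pair of finite presentations $\langle w_{1},\ldots,w_{n} \mid c_{1},\ldots,c_{k} \rangle$ and $Q$, and to output $\{c_{1}, \ldots, c_{k}\}$ for the first $k$ at which some test halts. By hypothesis $\overline{Q} \cong H$, so the test at $k = k_{0}$ is guaranteed to halt, whence the dovetailed search terminates; and whenever Lemma \ref{isom check} halts it genuinely certifies $\overline{\langle w_{1},\ldots,w_{n} \mid c_{1},\ldots,c_{k} \rangle} \cong \overline{Q} \cong H$. Each output relator is trivial in $\overline{P}$ by construction, so all the required conclusions hold.

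The main point to be careful about — and the reason this does \emph{not} settle Question \ref{main ques} — is that the search may halt at some $k < k_{0}$. At such a $k$ the substitution $w_{i} \mapsto w_{i}$ is still well-defined (the relators being trivial in $\overline{P}$) and gives a surjection $\overline{\langle w_{1},\ldots,w_{n} \mid c_{1},\ldots,c_{k} \rangle} \twoheadrightarrow H$ whose kernel $N/\llangle c_{1},\ldots,c_{k} \rrangle$ need not be trivial; yet if $H$ is non-Hopfian this proper quotient can still be abstractly isomorphic to $H$, so Lemma \ref{isom check} will accept it. Thus the algorithm delivers a finite presentation abstractly isomorphic to $\overline{Q}$, but the isomorphism produced by Lemma \ref{isom check} is between abstract groups and carries no guarantee that the natural map onto the \emph{subgroup} $H \leqslant \overline{P}$ is an isomorphism. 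Securing only this abstract-isomorphism conclusion is the entire content of the statement, and I would deliberately not try to upgrade it to the explicit embedding demanded by Question \ref{main ques}, since that stronger conclusion is precisely what the later results show to be impossible.
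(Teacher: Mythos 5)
Your proposal is correct and takes essentially the same approach as the paper's own proof: enumerate the words in $\{w_{1},\ldots,w_{n}\}^{*}$ that are trivial in $\overline{P}$ via Lemma \ref{words}, invoke Lemma \ref{truncate} to guarantee the existence (non-constructively) of a finite truncation presenting the subgroup, and then dovetail the partial isomorphism tester of Lemma \ref{isom check} against $Q$ until some truncation is certified isomorphic. Your closing caveat --- that the search may halt at a truncation $k < k_{0}$ which is only abstractly isomorphic to $\overline{Q}$, with no guarantee on the natural map onto the subgroup --- is exactly the subtlety the paper itself stresses in the remark following the proposition.
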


\begin{proof}
Begin an enumeration $c_{1}, c_{2}, \ldots$ of all words in the $\{w_{1}, \ldots, w_{n}\}^{*}$ which are trivial in $\overline{P}$ (when viewed as words in $X^{*}$); this can be done by repeated application of lemma \ref{words}. Define the presentation $P_{l}:= \langle w_{1}, \ldots, w_{n}  | c_{1}, \ldots, c_{l}  \rangle$.
By lemma \ref{truncate}, as $\langle w_{1}, \ldots, w_{n} \rangle ^{\overline{P}}$ is finitely presentable, and $T:=\langle w_{1}, \ldots, w_{n}  | c_{1}, c_{2}, \ldots \rangle$ is a recursive presentation of a group isomorphic to $\langle w_{1}, \ldots, w_{n} \rangle ^{\overline{P}}$ via extension of the map $w_{i} \mapsto w_{i}$, there exists some finite $m$ such that $\overline{P}_{m} \cong \overline{T}$ (again via extension of the map $w_{i} \mapsto w_{i}$). That is, using lemma \ref{truncate} we can truncate the relations of $T$ at position $m$ (forming $P_{m}$) such that all successive relations are consequences of the first $m$ (note that selecting $m$ is not an algorithmic process; for the moment we merely appeal to the fact that such an $m$ exists). So we have $\overline{Q}\cong \langle w_{1}, \ldots, w_{n} \rangle ^{\overline{P}} \cong \overline{T}\cong \overline{P}_{m}$, where $Q$ is our given explicit finite presentation. Now we use lemma \ref{isom check} to begin checking for an isomorphism between $\overline{Q}$ and $\overline{P}_{1}, \overline{P}_{2}, \ldots$ as a parallel process. Eventually this process will stop at some $k$ (perhaps different from $m$) such that $\overline{P}_{k} \cong \overline{T} \cong \overline{Q}$.
\end{proof}

Note that we were very careful to mention that the selection of $m$ in the above proof was existential, but not necessarily recursive. This is very important, as later in corollary \ref{find m} we construct a class of recursive presentations for which the selection of such an $m$ is provably non-recursive.

\begin{rem}
With the above proof in mind, one would naively hope that the map from $ P_{k}= \langle w_{1}, \ldots, w_{n}  | c_{1}, \ldots, c_{k}  \rangle$ to $P =\langle X | R \rangle$ given by extending $w_{i} \mapsto w_{i} \in X^{*}$ would be an injection onto its image $\langle w_{1}, \ldots, w_{n} \rangle ^{\overline{P}}$; in the case that $\overline{P}_{k}$ is Hopfian this would indeed be true, as shown later in in lemma \ref{hopf answer}. But theorem \ref{main} shows that this is not true in general.
\end{rem}

The existence of a finitely presented non-Hopfian group, combined with lemma \ref{truncate}, immediately gives rise to the following:

\begin{lem}\label{jack}
There exist finite presentations $P=\langle x_{1}, \ldots, x_{n} | r_{1}, \ldots, r_{m} \rangle$ and $Q=\langle x_{1}, \ldots, x_{n} | r_{1}, \ldots, r_{m}, s_{1}, \ldots, s_{k} \rangle$, along with a recursive presentation $T=\langle x_{1}, \ldots, x_{n} | r_{1},r_{2}, \ldots \rangle$ (where $r_{i}$ are the same in $P, Q, T$ for all $i \leq m$) such that:
\\$1$. $\overline{P}\cong \overline{Q} \cong \overline{T}$ (and hence $\overline{T}$ is finitely presentable).
\\$2$. The quotient maps $\overline{\phi}: \overline{P} \to \overline{Q}$ and $\overline{\psi}: \overline{P} \to \overline{T}$, each an extension of the map $x_{i} \mapsto x_{i}$, are not isomorphisms.
\end{lem}

Observe that the above quotient maps are always isomorphisms in the case of Hopfian groups, by very definition. The point of making the above observation is to stress the following:
\\1. There may be many ways to truncate the relators of $T$ to get a presentation of a group isomorphic to $\overline{P}$.
\\2. Truncating $T$ to get a finite presentation $T'$ with $\overline{T'} \cong \overline{T}$ may not give a presentation which is isomorphic to $\overline{P}$ via the quotient map we have been discussing.

\begin{lem}\label{hopf answer}
Let $P$, $P_{k}$ and $\{w_{1}, \ldots, w_{n}\}$ be as in the proof of proposition $\ref{almost}$. If $\langle w_{1}, \ldots, w_{n} \rangle ^{\overline{P}}$ is Hopfian, then the map $\phi: \{w_{1}, \ldots, w_{n}\} \to X^{*}$ sending $w_{i}$ (as a generator of $P_{k}$) to $w_{i}$ (as a word in $X^{*}$) extends to a monomorphism $\overline{\phi}: \overline{P}_{k} \hookrightarrow \overline{P}$, and thus to an isomorphism to its image $\langle w_{1}, \ldots, w_{n} \rangle ^{\overline{P}}$.
\end{lem}

\begin{proof}
By the proof of proposition \ref{almost}, we know that $\phi$ extends to a surjection $\overline{\phi}: \overline{P}_{k} \twoheadrightarrow \langle w_{1}, \ldots, w_{n} \rangle ^{\overline{P}}$. But $\overline{P}_{k} \cong \langle w_{1}, \ldots, w_{n} \rangle ^{\overline{P}}$, which is Hopfian. Hence $\overline{\phi}$ must be injective, and thus an isomorphism.
\end{proof}

\begin{thm}\label{can do for locally hopf}
The subgroup identification problem is uniformly solvable in the class of finitely presented locally Hopfian groups. Hence, in this class of groups, weak effective coherence is equivalent to strong effective coherence.
\end{thm}

\begin{proof}
The first part follows from lemma \ref{hopf answer}; the second is then immediate.
\end{proof}

\section{Main results}

We begin by making the important observation that a solution to the word problem can't be algorithmically lifted from a finite presentation of $\overline{\BS}(2,3)$ to a recursive presentation of $\overline{\BS}(2,3)$.

\begin{thm}\label{thm pre main}
There is a finite presentation $P$ of a group with solvable word problem (namely, $\BS(2,3)$) for which there is no partial algorithm that, on input of a recursive presentation $Q$ such that $\overline{P} \cong \overline{Q}$, outputs a solution to the word problem for $Q$.
\end{thm}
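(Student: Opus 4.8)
The plan is to reduce the problem to Lemma \ref{finite re}. To each index $n$ I will attach a recursive presentation $Q_n$ of a group that is genuinely isomorphic to $\BS(2,3)$ whenever $W_n$ is finite, and which is rigged so that a solution to the word problem for $Q_n$ reveals $|W_n|$. Since the assignment $n \mapsto Q_n$ will be uniform, a hypothetical algorithm as in the statement would then yield a partial recursive function $g$ computing $|W_n|$ on every finite $W_n$, contradicting Lemma \ref{finite re}.

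The construction exploits the non-Hopfian structure of $\BS(2,3)$. Write $N_i := \ker(\overline{f^i})$ for the kernels of the iterated epimorphism of Corollary \ref{word}; these are nested, $N_0 = \{e\} \subseteq N_1 \subseteq N_2 \subseteq \cdots$, each $N_i$ is decidable uniformly in $i$ (because $\overline{f^i}$ is explicit and $\BS(2,3)$ has solvable word problem), and by the first isomorphism theorem $\BS(2,3)/N_i \cong \image(\overline{f^i}) = \BS(2,3)$. Given $n$, I first apply Lemma \ref{crush} to replace $W_n$ by $W_{h(n)}$, which is $\{0,\ldots,|W_n|-1\}$ when $W_n$ is finite. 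I then take $Q_n$ to have generators $s,t$, the Baumslag--Solitar relator $s^{-1}t^2 s t^{-3}$, and an r.e.\ list of further relators produced by dovetailing: whenever the number $j$ is enumerated into $W_{h(n)}$, I begin enumerating all words of $N_{j+1}$ as relators. Because the $N_i$ are nested, if $W_n$ is finite with $|W_n| = m$ then the normal closure of all added relators is exactly $N_m$, so $\overline{Q_n} = \BS(2,3)/N_m \cong \BS(2,3)$; thus $Q_n$ is a legitimate input to the putative algorithm. That $n \mapsto (\text{an index for } Q_n)$ is recursive follows from the s-m-n theorem (Theorem \ref{smn}).

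The word problem for $Q_n$ now encodes $|W_n|$ through the words $w_j$ of Definition \ref{wi}. In $\overline{Q_n} = \BS(2,3)/N_m$, Corollary \ref{word} gives $w_j = e$ precisely when $j \le m = |W_n|$. Hence, given any decision procedure for the word problem of $Q_n$, one recovers $|W_n|$ as the least $j$ with $w_{j+1} \neq e$ in $\overline{Q_n}$, a terminating search since the procedure decides each query and such a $j$ exists. Assembling the steps: were the algorithm of the statement to exist, then on input $n$ I would build $Q_n$, feed it to the algorithm to obtain a word-problem solver for $Q_n$, and run this search. When $W_n$ is finite, $Q_n$ is a valid input, so the algorithm returns a correct solver and the search halts with $|W_n|$, producing exactly the forbidden function $g$ of Lemma \ref{finite re}. (When $W_n$ is infinite I make no claim about the output, which is harmless, as Lemma \ref{finite re} constrains only finite $W_n$.)

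The crux, and main obstacle, is the second step: arranging a single recursive presentation $Q_n$ that is genuinely isomorphic to $\BS(2,3)$ for every $n$ with $W_n$ finite, while still letting its word problem detect $|W_n|$. This is precisely where non-Hopfianness is indispensable — it is the fact that quotienting $\BS(2,3)$ by the nontrivial normal subgroups $N_i$ returns $\BS(2,3)$ that permits inserting ever more collapsing relators without ever leaving the isomorphism type, so that the ``amount of collapsing'' (which of the $w_j$ become trivial) faithfully records $|W_n|$. The remaining work is bookkeeping: verifying that the dovetailed enumeration of the nested kernels is uniform in $n$, so that the s-m-n theorem applies.
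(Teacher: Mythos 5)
Your proposal is correct and follows essentially the same route as the paper's own proof: the same recursive presentation built from the nested kernels $\ker(\overline{f^{i}})$ indexed over $W_{h(n)}$ (via Lemma \ref{crush}), the same use of the words $w_{j}$ and Corollary \ref{word} to read off $|W_{n}|$ from a word-problem solver, and the same final contradiction with Lemma \ref{finite re}. The only cosmetic difference is that you invoke the s-m-n theorem explicitly for the uniformity of $n \mapsto Q_{n}$, which the paper leaves implicit.
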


\begin{proof}
Assume we have such an algorithm. Take $P=\langle X|R \rangle$ to be a finite presentation for $\overline{\BS}(2,3)$. Fix $Q= \langle X|R \cup S \rangle$ to be a finite presentation of a non-trivial quotient which is isomorphic to $\overline{P}$, but not by the extension $\overline{\id}_{X}$ of the map $\id_{X}: X \to X$ (say, instead, by the extension $\overline{\phi}$ of the map $\phi: X \to X^{*}$). Fix a word $w\in X^{*}$ such that $w$ lies in the kernel of $\overline{\phi}$, but is not trivial in $\overline{\BS}(2,3)$. Given any r.e.~set $W_{i}$, we form the recursive presentation $P_{i,j}:=\langle X|R\ (\cup S \textnormal{ if }j\in W_{i})\ \rangle$. That is, $P_{i,j}:=\langle X|R\rangle$ if $j\notin W_{i}$, and $P_{i,j}:=\langle X|R\cup S\rangle$ if $j\in W_{i}$. So $P_{i,j}$ is a recursive presentation (we add all the relators $S$ to $Q$ if we see $j\in W_{i}$). Now use our assumed algorithm that solves the word problem in $P_{i,j}$ to test if $\overline{\phi}(w)=e$ in $\overline{P}_{i,j}$; this will occur if and only if $j\in W_{i}$. By taking $W_{i}$ to be non-recursive, we derive a contradiction.
\end{proof}

\begin{note}
The above proof is not the original proof as developed by the author, but a simplified version due to Chuck Miller which follows from lemma \ref{jack} of this work. The author is grateful to Chuck for this simplification.
\end{note}

\begin{lem}\label{pre main}
There is a finite presentation $P=\langle X|R \rangle$ of a group with solvable word problem (namely, $\BS(2,3)$) for which there is no partial algorithm that, on input of a recursive presentation $Q=\langle Y|S \rangle$ such that $\overline{P} \cong \overline{Q}$, outputs a set map $\phi: X \to Y^{*}$ which extends to an isomorphism $\overline{\phi} : \overline{P} \to \overline{Q}$.
\end{lem}

\begin{proof}
Suppose such an algorithm exists. Given a recursive presentation $Q=\langle Y|S \rangle$ such that $\overline{P} \cong \overline{Q}$, use our supposed algorithm to output a set map $\phi: X \to Y^{*}$ which extends to an isomorphism $\overline{\phi} : \overline{P} \to \overline{Q}$. But since we have a solution to the word problem for $P$, we can combine this with the map $\phi$ to get a solution to the word problem for $Q$, thus contradicting theorem \ref{thm pre main}.
\end{proof}

Note that, by lemma \ref{truncate}, in the above proof there will always be a finite subset $S'$ of $S$ such that all other relators are consequences of $S'$, and hence $\overline{\langle Y|S \rangle} \cong \overline{\langle Y|S' \rangle}$. So we have the following immediate corollary:

\begin{cor}\label{find m}
There is a finite presentation $P=\langle X|R \rangle$ of a group with solvable word problem (namely, $\BS(2,3)$) for which there is no partial algorithm that, on input of a recursive presentation $Q=\langle Y|S \rangle$ such that $\overline{P} \cong \overline{Q}$, outputs a finite subset $S' \subseteq S$ such that all other relators $S \setminus S'$ are consequences of $S'$. Equivalently, construction of a finite truncation point $m$ of $S$ (with $m$ as in the proof of proposition $\ref{almost}$) is not algorithmically possible in general.
\end{cor}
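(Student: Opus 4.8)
The plan is to derive this as a direct reduction to theorem \ref{pre main}, taking $P$ to be the very same finite presentation of $\overline{\BS}(2,3)$ supplied by that theorem. I would argue by contradiction: suppose there were an algorithm that, on input of a recursive presentation $Q=\langle Y|S\rangle$ with $\overline{Q}\cong\overline{P}$, produced a finite subset $S'\subseteq S$ all of whose complementary relators $S\setminus S'$ are consequences of $S'$.

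The key observation is that such an $S'$ immediately furnishes a \emph{finite} presentation of $\overline{Q}$ on the same generating set. Indeed, since every relator in $S\setminus S'$ is a consequence of $S'$, the identity map on $Y$ extends to an isomorphism $\overline{\langle Y|S'\rangle}\to\overline{\langle Y|S\rangle}=\overline{Q}$; hence $\langle Y|S'\rangle$ is a finite presentation with $\overline{\langle Y|S'\rangle}\cong\overline{Q}\cong\overline{P}$. Now both $P=\langle X|R\rangle$ and $\langle Y|S'\rangle$ are finite presentations of isomorphic groups, so I would invoke lemma \ref{isom check} to algorithmically produce an explicit isomorphism between them, which by the construction in that lemma is delivered as a set map $\phi:X\to Y^{*}$ extending to $\overline{\phi}:\overline{P}\to\overline{\langle Y|S'\rangle}$.

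Composing $\overline{\phi}$ with the identity-induced isomorphism $\overline{\langle Y|S'\rangle}\to\overline{Q}$ (which fixes every generator of $Y$) leaves the set map $\phi:X\to Y^{*}$ literally unchanged, but now exhibits it as extending to an isomorphism $\overline{P}\to\overline{Q}$. Chaining the two algorithms — first compute $S'$, then run lemma \ref{isom check} on $P$ and $\langle Y|S'\rangle$ — therefore yields exactly the procedure forbidden by theorem \ref{pre main}, the desired contradiction.

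The argument is routine once theorem \ref{pre main} is in hand; the only point requiring care is to confirm that the set map returned by lemma \ref{isom check} genuinely maps into $Q$ and not merely into the truncation $\langle Y|S'\rangle$. This is immediate, since the two presentations share the generating set $Y$ and passing from $S'$ to $S$ only adds consequences, so the underlying group and the interpretation of words in $Y^{*}$ are identical. Finally, the stated equivalence with the non-constructibility of the truncation point $m$ from proposition \ref{almost} is just a matter of reindexing: a recursive enumeration of $S$ together with a halting truncation index $m$ is the same data as a finite subset $S'$ whose complement lies in the consequence closure of $S'$, so an algorithm producing either one produces the other.
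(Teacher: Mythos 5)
Your proof is correct and is essentially the paper's own argument: the paper states corollary \ref{find m} as an immediate consequence of theorem \ref{pre main}, with precisely the reduction you spell out --- computing $S'$ turns $Q$ into a finite presentation $\langle Y|S'\rangle$ of the same group (same normal closure, hence literally the same quotient), so lemma \ref{isom check} recovers a set map $\phi: X \to Y^{*}$ extending to an isomorphism $\overline{P} \to \overline{Q}$, which theorem \ref{pre main} forbids. Your closing points, that the identity-induced isomorphism leaves $\phi$ unchanged and that the truncation index $m$ and the finite subset $S'$ are interconvertible data, are also sound.
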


As an aside, if we were instead to view isomorphisms between groups as Tietze transformations rather than set maps (see \cite[$\S 1.5$]{MKS} for a full exposition of this concept) then we can interpret lemma \ref{pre main} in the following way.

\begin{lem}\label{Ti version}
There is a finite presentation $P$ of a group with solvable word problem (namely, $\BS(2,3)$) for which there is no algorithm that, on input of a recursive presentation $Q$ such that $\overline{P} \cong \overline{Q}$, outputs a recursive enumeration of Tietze transformations of type $(\ti 1)$ and $(\ti 2)$ (manipulation of relators), and a finite set of Tietze transformations of type $(\ti 3)$ and $(\ti 4)$ (addition and removal of generators), with notation as per \cite[$\S 1.5$]{MKS}, transforming $P$ to $Q$.
\end{lem}

\begin{rem}
It should be pointed out that such a sequence of Tietze transformations as described above will always exist, which follows directly from lemma \ref{truncate}; we merely truncate the relators of $Q$ to get a finite presentation $Q'$, perform a finite sequence of Tietze transformations which takes $P$ to $Q'$ (possible by \cite[Corollary 1.5]{MKS}), and then add the rest of the enumeration of relators of $Q$ to $Q'$. The point here is that we cannot compute an enumeration of such a sequence in general.
\end{rem}

Before proceeding to our main result, we note the following lemma which shows that the direction in which we define our isomorphism in lemma \ref{pre main} is inconsequential.

\begin{lem}
Using the notation of lemma $\ref{pre main}$, having an explicit set map $\phi: X \to Y^{*}$ which extends to an isomorphism $\overline{\phi}$ allows us to construct a set map $\psi: Y \to X^{*}$ which extends to the inverse of $\overline{\phi}$ (and hence is an isomorphism). The reverse also holds. 
\end{lem}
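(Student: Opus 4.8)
The plan is to exploit surjectivity of the given isomorphism to guarantee that suitable preimage words \emph{exist}, and then to \emph{find} them by a one-sided search, using recursive enumerability of the trivial words in the recursive presentation on one side and the solvable word problem for $\overline{P}$ on the other. I will treat the first implication in detail and then indicate the symmetric argument.

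First I would handle the forward direction: suppose we are given $\phi: X \to Y^{*}$ extending to an isomorphism $\overline{\phi}: \overline{P} \to \overline{Q}$. For each generator $y \in Y$ I want to produce a word $\psi(y) \in X^{*}$ that represents $\overline{\phi}^{-1}(y)$; concretely, I seek $v \in X^{*}$ such that the word $\phi(v)\, y^{-1} \in Y^{*}$ (obtained by substituting $x \mapsto \phi(x)$ into $v$) is trivial in $\overline{Q}$. Such a $v$ exists because $\overline{\phi}$ is surjective. To find it, I dovetail over all $v \in X^{*}$ and, for each candidate, run the enumeration of lemma \ref{words} applied to the recursive presentation $Q$, searching for a certificate that $\phi(v)\, y^{-1} = e$ in $\overline{Q}$. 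Since a valid $v$ exists and lemma \ref{words} enumerates exactly the words trivial in $\overline{Q}$, this search halts; I set $\psi(y)$ to be the first $v$ found, repeating the procedure for each of the finitely many $y \in Y$.

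It then remains to check that $\psi$ does what is claimed. Because $\overline{\phi}$ is injective, the relation $\overline{\phi}(\psi(y)) = y$ in $\overline{Q}$ forces $\psi(y)$ to represent $\overline{\phi}^{-1}(y)$, so $\psi$ agrees on the generators $Y$ with the genuine homomorphism $\overline{\phi}^{-1}: \overline{Q} \to \overline{P}$. Consequently every relator $s \in S$ maps under $\psi$ to a word representing $\overline{\phi}^{-1}(e) = e$ in $\overline{P}$, whence $\psi$ extends to a homomorphism $\overline{\psi}: \overline{Q} \to \overline{P}$, and this homomorphism is precisely $\overline{\phi}^{-1}$, the desired inverse isomorphism. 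For the reverse direction, given $\psi: Y \to X^{*}$ extending to an isomorphism $\overline{\psi}: \overline{Q} \to \overline{P}$, I construct $\phi: X \to Y^{*}$ for $\overline{\psi}^{-1}$ by the same strategy, except that the relevant triviality tests now take place in $\overline{P}$, which has solvable word problem (theorem \ref{BS}): for each $x \in X$ I enumerate $u \in Y^{*}$ and \emph{decide} whether $\psi(u)\, x^{-1} = e$ in $\overline{P}$, halting at the first success (which exists by surjectivity of $\overline{\psi}$), and set $\phi(x) := u$. The verification is identical to the forward case.

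The only subtle point — and the step I would flag as the crux — is that in the forward direction we never decide, and indeed by theorem \ref{thm pre main} cannot decide, the word problem in the recursive presentation $\overline{Q}$. We get away with the mere semi-decision procedure of lemma \ref{words} because surjectivity of the supplied isomorphism guarantees a priori that the search succeeds, so a positive-only test suffices. This asymmetry, solvable word problem on the finite side $\overline{P}$ versus recursive enumerability of trivial words on the recursive side $\overline{Q}$, is exactly what makes both directions go through.
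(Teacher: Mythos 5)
Your proof is correct and is essentially the paper's own argument, fleshed out in detail: the paper's one-line proof appeals to exactly this search---dovetail over candidate preimage words and certify them via the enumeration of trivial words (lemma \ref{words}), with termination guaranteed by surjectivity of the given isomorphism. One small correction to your closing remark: the ``asymmetry'' you flag is not actually needed, since the finite presentation $P$ is in particular a recursive presentation, so lemma \ref{words} suffices on both sides and the solvable word problem for $\overline{P}$ is never required.
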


\begin{proof}
This follows from the fact that we only deal with recursive presentations, and we can uniformly enumerate all trivial words of such presentations by lemma \ref{words}.
\end{proof}

We now have all the technical machinery required to prove our main result:

\begin{thm}\label{main}
There is a finitely presented group with unsolvable subgroup identification problem.
\end{thm}

Note that, by theorem \ref{can do for locally hopf}, such a group cannot be locally Hopfian.

\begin{proof}
Take a recursive enumeration $P_{i}:=\langle X_{i}|R_{i} \rangle$ of all recursive presentations of groups (fix some countably infinite alphabet $\mathcal{A}$; each $X_{i}$ is a finite subset of $\mathcal{A}$, and each $R_{i}$ is a recursive enumeration of words in $X_{i}^{*}$). By the Higman embedding theorem (see \cite[Theorem 12.18]{Rot}) we can embed their free product (with presentation $P_{1}*P_{2}*\ldots$) into a finitely presented group with presentation $P=\langle X|R \rangle$. Then the group $\overline{P}$ does not have solvable subgroup identification problem, for if it did then we could use this to contradict lemma \ref{pre main} as follows:
\\By the uniformity of Higman's result, as described in the proof of \cite[Theorem 12.18]{Rot}, there is a uniform procedure that, for any $i$, outputs a finite set of words $S_{i} \subset X^{*}$ in 1-1 correspondence with $X_{i}$ such that the subgroup $\langle S_{i} \rangle^{\overline{P}}$ is isomorphic to $\overline{P}_{i}$, and an explicit bijection $\phi_{i}: X_{i} \to S_{i}$ which extends to an isomorphism $\overline{\phi_{i}}: \overline{P_{i}} \to \langle S_{i} \rangle^{\overline{P}}$. That is, we can keep track of where each $\overline{P}_{i}$ is sent in this embedding. So, given a recursive presentation $Q=\langle Y|S\rangle$ and a finite presentation $H=\langle Z|V\rangle$ such that $\overline{H} \cong \overline{Q}$, compute $j$ such that $P_{j}=Q$ as recursive presentations (that is, number all Turing machines which read alphabet $Y$, and look for one identical to the description $S$). We know that $\overline{H}\cong \langle S_{j} \rangle^{\overline{P}}$. So use our algorithm to output a set map $\psi: Z \to S_{j}^{*}$ which extends to an isomorphism $\overline{\psi}: \overline{H} \to \langle S_{j} \rangle^{\overline{P}}$. But we can compose $\psi$ with $\phi_{j}^{-1}$ to get the set map $\phi_{j}^{-1} \circ \psi: Z \to Y^{*}$ which extends to an isomorphism $\overline{\phi_{j}^{-1} \circ \psi}: \overline{H} \to \overline{Q}$. But this is impossible by lemma \ref{pre main}, so our assumed algorithm can't exist.
\end{proof}

\section{Further work}

The group $G$ constructed in theorem \ref{main} contains an embedded copy of every recursively presented group, and so is not coherent, let alone weakly effectively coherent. We ask the question of whether theorem \ref{main} can be modified so that $G$ is weakly effectively coherent, or even merely coherent. Either of these would help make the result even more relevant, as we conjecture that strong and weak effective coherence are not equivalent properties for finitely presented groups.

\begin{tt}
\ 
\\Dipartimento di Matematica `Federigo Enriques'
\\Universit\`{a} degli Studi di Milano
\\Via Cesare Saldini 50, Milano, 20133, ITALIA
\\maurice.chiodo@unimi.it
\end{tt}

\end{document}